\documentclass[a4paper,11pt]{article}
\usepackage[latin1]{inputenc}
\usepackage[english]{babel}
\usepackage{amsmath}
\usepackage{amsfonts}
\usepackage{amssymb}
\usepackage{epsfig}
\usepackage{amsopn}
\usepackage{amsthm}
\usepackage{color}
\usepackage{graphicx}
\usepackage{subfigure}
\usepackage{enumerate}
\usepackage{tikz-cd}
\newtheorem{theorem}{Theorem}[section]

\newtheorem{proposition}[theorem]{Proposition}
\newtheorem{definition}[theorem]{Definition}

\newtheorem*{theorem*}{Theorem}
\newtheorem*{lemma*}{Lemma}
\newtheorem*{remark*}{Remark}
\newtheorem*{definition*}{Definition}
\newtheorem*{proposition*}{Proposition}
\newtheorem*{corollary*}{Corollary}
\numberwithin{equation}{section}
\newcommand{\vertiii}[1]{{\left\vert\kern-0.25ex\left\vert\kern-0.25ex\left\vert #1
    \right\vert\kern-0.25ex\right\vert\kern-0.25ex\right\vert}}

\newcommand{\real}{\mathbb{R}}

\let\ced=\c         

\def\qed{\,\unskip\kern 6pt \penalty 500
\raise -2pt\hbox{\vrule \vbox to8pt{\hrule width 6pt
\vfill\hrule}\vrule}\par}
\definecolor{darkblue}{rgb}{0.05, .05, .65}
\definecolor{darkgreen}{rgb}{0.1, .65, .1}
\definecolor{darkred}{rgb}{0.8,0,0}
\newcommand{\beqn}{\begin{equation}}
\newcommand{\eeqn}{\end{equation}}
\newcommand{\bear}{\begin{eqnarray}}
\newcommand{\eear}{\end{eqnarray}}
\newcommand{\bean}{\begin{eqnarray*}}
\newcommand{\eean}{\end{eqnarray*}}
\begin{document}


\title{\huge \bf Blow-up rates and sets for a quasilinear diffusion equation with weighted source}

\author{\Large Ra\'ul Ferreira\,\footnote{Departamento de An\'alisis Matem\'atico y Matem\'{a}tica
Aplicada, Universidad Complutense de Madrid, 28040, Madrid, Spain, \textit{e-mail:} raul\textunderscore ferreira@mat.ucm.es},
\\[4pt]
\Large Razvan Gabriel Iagar\,\footnote{Departamento de Matem\'{a}tica
Aplicada, Ciencia e Ingenieria de Materiales y Tecnologia
Electr\'onica, Universidad Rey Juan Carlos, M\'{o}stoles,
28933, Madrid, Spain, \textit{e-mail:} razvan.iagar@urjc.es. Corresponding Author.},
\\[4pt] \Large Ariel S\'{a}nchez,\footnote{Departamento de Matem\'{a}tica
Aplicada, Ciencia e Ingenieria de Materiales y Tecnologia
Electr\'onica, Universidad Rey Juan Carlos, M\'{o}stoles,
28933, Madrid, Spain, \textit{e-mail:} ariel.sanchez@urjc.es}\\
[4pt] }
\date{}
\maketitle

\begin{abstract}
Blow-up rates are established for general solutions to the quasilinear diffusion equation
$$
\partial_tu=\Delta u^m+|x|^{\sigma}u^p, \quad (x,t)\in\real^N\times(0,T),
$$
in the range of exponents $1<p<m$, $\sigma>0$. More precisely, if we consider a compactly supported solution $u(x,t)$ with blow-up time $T=T(u)\in(0,\infty)$, we derive the blow-up rate
$$
C_1(T-t)^{-\alpha}\leq \|u(x,t)\|_{\infty}\leq C_2(T-t)^{-\alpha}, \quad t\in(0,T),
$$
for some positive constants $C_1$, $C_2$, and the upper rate of expansion of the support
$$
\sup\{|x|:u(x,t)>0\}\leq C_0(T-t)^{-\beta}, \quad t\in(0,T),
$$
for some constant $C_0>0$, where
$$
\alpha=\frac{\sigma+2}{L}, \quad \beta=\frac{m-p}{L}, \quad L=\sigma(m-1)+2(p-1).
$$
We also analyze the blow-up sets of solutions $u$, showing, under a suitable condition, that either $B(u)=\real^N$ or blow-up takes place only as $|x|\to\infty$.
\end{abstract}

\

\noindent {\bf Mathematics Subject Classification 2020:} 35A21, 35B44, 35K10, 35K57, 35K65.

\smallskip

\noindent {\bf Keywords and phrases:} reaction-diffusion equations, inhomogeneous reaction, blow-up rates, finite time blow-up, blow-up set.

\section{Introduction}

The reaction-diffusion equation
\begin{equation}\label{eq1}
\partial_tu=\Delta u^m+|x|^{\sigma}u^p, \quad (x,t)\in Q_T:=\real^N\times(0,T), \quad T>0,
\end{equation}
presents an interesting competition between the effect of the diffusion and the one of the source term. The former is a conservative process for $m>1$, preserving the total mass while expanding the support of any compactly supported solution, while the latter produces an increase of the $L^1$ norm of any solution, leading eventually to a finite time blow-up. By finite time blow-up of a solution $u$ to Eq. \eqref{eq1} we understand the existence of a time $T(u)\in(0,\infty)$, called \emph{the blow-up time} of $u$, such that $u(t)\in L^{\infty}(\real^N)$ for any $t\in(0,T(u))$, but $u(T(u))\notin L^{\infty}(\real^N)$. Throughout the paper, we will drop from the notation of the blow-up time the dependence on $u$ when there is no danger of confusion. In the case of Eq. \eqref{eq1}, the competition described above is furthermore influenced by the variable coefficient $|x|^{\sigma}$, which enhances the effect of the reaction for large values of $|x|$ while reducing it in a neighborhood of the origin, introducing some noticeable differences in the behavior when $\sigma>0$ is large. We restrict ourselves to the range of exponents
\begin{equation}\label{range.exp}
1<p<m, \quad \sigma>0,
\end{equation}
and consider initial conditions satisfying
\begin{equation}\label{icond}
u(x,0)=u_0(x), \quad u_0\in C_{0,r}(\real^N), \quad u_0(x)\geq 0 \quad {\rm for \ any} \ x\in\real^N, \quad u_0\not\equiv0,
\end{equation}
where $C_{0,r}(\real^N)$ means the class of continuous, radially symmetric and compactly supported functions in $\real^N$. Properties such as well-posedness, comparison principle, finite speed of propagation and finite time blow-up of any solution of the Cauchy problem \eqref{eq1}-\eqref{icond} have been proved in the paper \cite{ILS24b}.

The aim of this paper is thus to shed some light on the finer blow-up behavior of solutions to the Cauchy problem \eqref{eq1}-\eqref{icond}, in the range of exponents \eqref{range.exp}, trying to answer to the following important questions related to the properties of the solutions near their blow-up time:

$\bullet$ deducing blow-up rates, that is, the time scale of $\|u(t)\|_{\infty}$ as $t\to T$.

$\bullet$ describing the blow-up set $B(u_0)$, that is, the set of points $x\in\real^N$ where the solution $u(x,t)$ to the Cauchy problem \eqref{eq1}-\eqref{icond} is unbounded as $t\to T$.

The previous questions are nowadays answered for the non-weighted reaction-diffusion equation (corresponding to taking $\sigma=0$ in Eq. \eqref{eq1}), and two rather complete monographs devoted to this case (containing much more information and focusing also on larger values of $p$) are available, \cite{QS} for the semilinear range $m=1$ and \cite{S4} for the quasilinear range $m>1$. In particular, letting $\sigma=0$ in Eq. \eqref{eq1}, it has been proved that, for initial conditions as in \eqref{icond}, the blow-up behavior is self-similar (see for example \cite[Chapter IV, Sections 5-6]{S4} or \cite{G95}). However, a number of techniques employed in the above mentioned references cannot be easily extended to Eq. \eqref{eq1} with $\sigma>0$ due to difficulties stemming from the unboundedness of the variable coefficient.

The influence of a space-dependent coefficient of the source term over the qualitative and dynamic properties of solutions to reaction-diffusion equations has been an interesting subject to study starting from papers such as \cite{BK87, Pi97, Pi98}, where local existence, global existence and finite time blow-up are discussed in the semilinear case $m=1$ and with rather general weights. Still in the semilinear case, the question of the blow-up set (and, noticeably, whether $x=0$ can or cannot be a blow-up point to Eq. \eqref{eq1} with $m=1$ and $\sigma>0$) has been discussed in a series of works \cite{GLS, GS11, GLS13, GS18}. The question related to how the blow-up takes place, in the sense of blow-up rates and self-similar profiles, has been addressed (still with $m=1$, $p>1$) in works such as \cite{FT00, MS21}, self-similar solutions being constructed in the former, while different blow-up rates than the expected self-similar ones are obtained in the latter for $p$ large.

Let us now discuss the state of art of the problem in our case of interest, which is Eq. \eqref{eq1} with $m>1$ and $\sigma>0$. The well-posedness of the Cauchy problem, in a very general framework, follows from Andreucci and DiBenedetto \cite{AdB91}. The question  of classifying initial conditions $u_0$ as in \eqref{icond} for which finite time blow-up occurs has been raised and partially answered in \cite{Qi98, Su02}, where a Fujita-type exponent $p_F=m+(\sigma+2)/N>m$ and a second critical exponent related to the decay of $u_0$ as $|x|\to\infty$ ensuring global existence when $p>p_F$ have been identified. Under some technical limitations on $\sigma$, but in the even more general case of considering the doubly nonlinear diffusion operator, blow-up rates were obtained in \cite{AT05} for $p>m$, proving that they coincide with the expected self-similar ones. Recently, directions for blow-up at infinity of suitable solutions have been obtained in \cite{SU21, SU24} (as well with $p>m$). A lot of progress has been done in understanding the dynamic properties of solutions to the equation
$$
u_t=\Delta u^m+a(x)u^p,
$$
where $a(x)$ is a continuous and compactly supported weight, see for example \cite{FdPV06, FdP18, FdP22, KWZ11, Liang12}. 

However, most of the previously mentioned results have been obtained either when $p>m$ or when the variable coefficient is bounded and compactly supported. Motivated by this lack of analysis of Eq. \eqref{eq1} in the range of exponents \eqref{range.exp}, two of the authors started a larger project of understanding the effect of the presence of an unbounded weight on the reaction term (taking $|x|^{\sigma}$ in order to preserve a scaling invariance and thus a self-similar structure). Our approach started from classifying the self-similar profiles to Eq. \eqref{eq1}, and restricting ourselves to the range \eqref{range.exp}, it was shown in \cite{IS21, ILS24a} that self-similar profiles presenting finite time blow-up always exist, but their blow-up sets strongly depend on the magnitude of $\sigma$: for $\sigma>0$ sufficiently small blow-up occurs in the whole space $\real^N$, while for $\sigma>0$ larger, all the self-similar solutions blow up only at infinity, a phenomenon analyzed previously in \cite{La84, GU06} for $\sigma=0$ and \cite{SU21} for more general space-dependent coefficients instead of $|x|^{\sigma}$, but only for $p>m$ and strictly positive initial conditions. We will state rigorously what blow-up at infinity means, in the section dedicated to the main results. With respect to self-similar solutions, the limiting cases $p=1$, respectively $p=m$ have been also considered in \cite{IS19}, respectively \cite{IS20, IS22, IL22}, showing that at least in the latter one, the classification of self-similar profiles strongly departs from the range $1<p<m$.

The next and natural step in the analysis, once having all the information on self-similar solutions, is to derive qualitative properties of general solutions to the Cauchy problem \eqref{eq1}-\eqref{icond}. A first step in this direction, in the range \eqref{range.exp}, has been performed in the recent paper \cite{ILS24b}. It has been proved therein that the Cauchy problem \eqref{eq1}-\eqref{icond} is well-posed, the comparison principle holds true and any non-trivial solution to it blows up in a finite time $T=T(u)$. Moreover, concerning the expansion of the support of a solution $u$, finite speed of propagation is established for $t\in(0,T(u))$, while the support becomes infinite (a phenomenon called \emph{blow-up of the interface}) as $t\to T(u)$.

The present work aims at going forward in the analysis of the Cauchy problem \eqref{eq1}-\eqref{icond} by deducing the blow-up rate of any solution $u$ and, when possible, analyzing its blow-up set when $t\to T(u)$. It is thus the right moment to introduce our main results.

\medskip

\noindent \textbf{Main results.} Let $m$, $p$ and $\sigma$ fulfill \eqref{range.exp}. Let $u_0$ be an initial condition as in \eqref{icond} and $u$ be the solution to the Cauchy problem \eqref{eq1}-\eqref{icond}, which is unique and blows up at a time $T\in(0,\infty)$, being also compactly supported for $t\in(0,T)$, as follows from \cite{ILS24b}. We set, for simplicity of the notation,
$$
Q_T:=\real^N\times(0,T).
$$
and write throughout the paper $u(t)$ for the mapping $x\mapsto u(x,t)$ for $t>0$ fixed. For the sake of completeness, we give below the definitions of the concepts we employ in the paper, starting with the notion of solution.
\begin{definition}[Weak solution]\label{def.weak}
We say that a function $u$ is a \emph{weak solution} to the Cauchy problem \eqref{eq1}-\eqref{icond} in $Q_T$ if $u\in C(\real^N\times(0,t))$ for any $t\in(0,T)$ and if, for any bounded domain $\Omega\subset\real^N$ and for any test function $\varphi\in C^{2,1}(\overline{\Omega}\times[0,T))$ such that $\varphi=0$ on $\partial\Omega\times(0,T)$, we have the following equality
\begin{equation}\label{weaksol}
\begin{split}
\int_{\Omega}u(x,t)\varphi(x,t)\,dx&-\int_{\Omega}u_0(x)\varphi(x,0)\,dx\\&=\int_0^t\int_{\Omega}\left(u(x,s)\partial_s\varphi(x,s)+u^m(x,s)\Delta\varphi(x,s)+|x|^{\sigma}u^p(x,s)\right)\,dx\,ds\\
&-\int_0^t\int_{\partial\Omega}u^m(x,s)\partial_{n}\varphi(x,s)\,dS\,ds,
\end{split}
\end{equation}
for any $t\in(0,T)$. We say that $u$ is a weak supersolution to the Cauchy problem \eqref{eq1}-\eqref{icond} if we replace equality by the sign $\geq$ in \eqref{weaksol} and we say that $u$ is a weak subsolution if we replace equality by the sign $\leq$ in \eqref{weaksol}, in both cases restricting the inequalities to non-negative test functions.
\end{definition}
We also define below what do we understand by a blow-up point and by blow-up at space infinity, following \cite{GU06}.
\begin{definition}[Blow-up point and blow-up set]\label{def.BUS}
Let $u$ be a solution to the Cauchy problem \eqref{eq1}-\eqref{icond} in $Q_T$, with $T\in(0,\infty)$ its blow-up time. A point $x\in\real^N$ is called a \emph{blow-up point} of $u$ if there exists a sequence $(x_k,t_k)_{k\geq1}$ in $Q_T$ such that
\begin{equation}\label{BUP}
\lim\limits_{k\to\infty}x_k=x, \quad \lim\limits_{k\to\infty}t_k=T, \quad \lim\limits_{k\to\infty}u(x_k,t_k)=\infty.
\end{equation}
We say that $u$ \emph{blows up at space infinity} if there exists a sequence $(x_k,t_k)_{k\geq1}$ in $Q_T$ such that
\begin{equation}\label{BUI}
\lim\limits_{k\to\infty}|x_k|=\infty, \quad \lim\limits_{k\to\infty}t_k=T, \quad \lim\limits_{k\to\infty}u(x_k,t_k)=\infty.
\end{equation}
The \emph{blow-up set} $B(u)\subseteq\real^N\cup\{\infty\}$ of a solution $u$ is the set containing its blow-up points. We say that $u$ blows up \emph{only at space infinity} if \eqref{BUI} holds true but \eqref{BUP} is false for any $x\in\real^N$.
\end{definition}
Since $u$ and $T(u)$ are uniquely determined by the initial condition $u_0$, we will sometimes write $B(u_0)$ instead of $B(u)$. We also write $B(u)=\{\infty\}$ to express the fact that the solution $u$ blows up only at infinity.

\medskip

Our first result concerns the blow-up rates of solutions to \eqref{eq1}-\eqref{icond} as $t\to T$. To this end, let us introduce the following exponents
\begin{equation}\label{SSexp}
\alpha=\frac{\sigma+2}{L}, \quad \beta=\frac{m-p}{L}, \quad L=\sigma(m-1)+2(p-1),
\end{equation}
which are exactly the self-similarity exponents of Eq. \eqref{eq1} in the range \eqref{range.exp}, according to \cite{ILS24a}. We then have the following result:
\begin{theorem}[Blow-up rates]\label{th.1}
Let $m$, $p$ and $\sigma$ be as in \eqref{range.exp} and let $u$ be a solution to the Cauchy problem \eqref{eq1}-\eqref{icond} and $T\in(0,\infty)$ its blow-up time. Then there exist positive constants $0<C_1<C_2<\infty$ such that
\begin{equation}\label{rates}
C_1(T-t)^{-\alpha}\leq \|u(t)\|_{\infty}\leq C_2(T-t)^{-\alpha},
\end{equation}
for any $t\in(0,T)$, where $\alpha$ is given in \eqref{SSexp}.
\end{theorem}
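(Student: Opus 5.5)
The plan is to prove the two inequalities in \eqref{rates} separately. The lower bound comes from a comparison/ODE argument combined with the support estimate. The upper bound comes from a rescaling (blow-up) argument of Giga--Kohn / Hu type adapted to the weight $|x|^{\sigma}$, which is the harder part.

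\textbf{Lower bound.} Set $M(t)=\|u(t)\|_\infty$ and $R(t)=\sup\{|x|:u(x,t)>0\}$. Since $u(t)$ is supported in $B_{R(t)}$, at a maximum point the reaction is at most $R(t)^\sigma M^p$. Comparison with spatially constant supersolutions on short time intervals then gives, in the viscosity sense,
$$
M'(t)\le R(t)^{\sigma}M(t)^p .
$$
This estimate is not closed, so I also need to control $R(t)$. Finite speed of propagation for $u^m$-diffusion suggests the interface moves with speed of order $M^{m-1}/R$. More concretely, I would compare $u$ with a travelling or self-similar compactly supported supersolution (built from the profiles of \cite{ILS24a}) to obtain
$$
R(t)\lesssim M(t)^{(m-p)/(\sigma+2)} .
$$
This exponent is exactly $\beta/\alpha$. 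Inserting it above gives
$$
M'\le C M^{p+\sigma(m-p)/(\sigma+2)}=CM^{1+1/\alpha},
$$
because $p-1+\sigma(m-p)/(\sigma+2)=L/(\sigma+2)$. Integrating from $t$ to $T$, where $M\to\infty$, yields $M(t)\ge C_1(T-t)^{-\alpha}$.

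A cleaner alternative for this step is to place, at each time $t_0$, a self-similar blow-up solution (or a suitably scaled one) above $u(t_0)$. Its blow-up time is $t_0+C M(t_0)^{-1/\alpha}$ once the supports are matched using $R(t_0)$, so that $T\le t_0+CM(t_0)^{-1/\alpha}$. I would pursue whichever variant the support estimate makes available.

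\textbf{Upper bound.} I argue by contradiction. Suppose there are $t_k\to T$ with $M(t_k)(T-t_k)^{\alpha}\to\infty$. Choosing $\hat t_k$ as in Pol\'a\v{c}ik--Quittner--Souplet doubling (or simply $\sup_{s\le t_k}M(s)$), I define
$$
v_k(y,s)=\lambda_k^{-\alpha}u(\lambda_k^{-\beta}y,\hat t_k+\lambda_k s),
\qquad \lambda_k=M(\hat t_k)^{-1/\alpha}\to0 .
$$
This scaling preserves Eq. \eqref{eq1}, because $\alpha,\beta$ are the self-similarity exponents, and gives $\|v_k(0)\|_\infty=1$, $v_k\le 2$ on $(-s_k,0]$ with $s_k\to\infty$.

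The main obstacle is the unbounded weight. The maximum point $x_k$ of $u(\hat t_k)$ may escape, $|y_k|=\lambda_k^{\beta}|x_k|\to\infty$ is possible, and then no uniform local equation holds. I would handle this by using radial symmetry together with a monotonicity in $|x|$. Since the weight is increasing, one can try to show $u$ is nonincreasing in $r$ outside a fixed ball, or more generally to bound $|y_k|$ via the support estimate $R(t)\lesssim M^{\beta/\alpha}$, which makes $|y_k|\le C$ after rescaling. Then local H\"older estimates for degenerate equations (DiBenedetto) give convergence to an ancient solution $v$ of \eqref{eq1}, defined on $(-\infty,0]$, bounded by $2$, with $v(y_0,0)=1$ and compactly supported uniformly in $s$. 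This is the point where the boundedness of the support of $v(s)$ is crucial.

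The contradiction then comes from integrating the equation. For compactly supported solutions, the mass $\int v(s)\,dy$ satisfies
$$
\frac{d}{ds}\int v=\int|y|^\sigma v^p\ge0,
$$
and this is bounded below by a positive constant because $v\not\equiv0$ near $s=0$ by continuity. Going backward to $s\to-\infty$ forces the mass to become negative, contradicting $v\ge0$. Alternatively, one can compare with a subsolution to show that the bounded ancient solution must blow up in finite forward time, contradicting the fact that $\hat t_k+\lambda_k s<T$ keeps $v$ bounded for a definite time. Either way $M(t)(T-t)^\alpha$ stays bounded, which gives $C_2$.
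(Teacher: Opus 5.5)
Your architecture matches the paper's: an ODE inequality for the maximum plus control of the support for the lower bound, and a blow-up rescaling for the upper bound. The inequality $M'\le R^{\sigma}M^p$ and the exponent bookkeeping are fine. However, both halves of your argument rest on $R(t)\lesssim M(t)^{\beta/\alpha}$, which you never prove, and this is the real difficulty.

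The interface-speed heuristic is not a bound. The edge moves with the pressure gradient at the interface, and a general solution does not control that by $M^{m-1}/R$. Even granting $RR'\lesssim M^{m-1}$, you only get $R(t)^2\lesssim R(0)^2+\int_0^t M^{m-1}$. That is comparable to $M(t)^{2\beta/\alpha}$ only if you already know $M\approx (T-t)^{-\alpha}$, so the argument is circular. Dominating $u(t_0)$ by a self-similar supersolution also requires matching supports, which is the same estimate again. Note too that a supersolution placed above $u(t_0)$ gives $T\ge t_0+cM(t_0)^{-1/\alpha}$, not $T\le$.

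The missing idea is to bound the support in terms of $T-t$ rather than $M$. Proposition~\ref{prop.support} gives $R(t)\le C_0(T-t)^{-\beta}$ by intersection comparison with a self-similar subsolution. That subsolution has the \emph{same} blow-up time $T$ and a profile supported in an annulus $[\zeta_0,\zeta^0]$ lying initially outside ${\rm supp}\,u_0$. Since the number of intersections cannot increase, the edge of ${\rm supp}\,u(t)$ stays below $\zeta^0(T-t)^{-\beta}$. Then $g'\le C_0^{\sigma}(T-t)^{-\beta\sigma}g^p$ integrates directly, because $1-\beta\sigma=(p-1)\alpha>0$. Your estimate $R\lesssim M^{\beta/\alpha}$ becomes a consequence of Proposition~\ref{prop.support} and the lower bound together. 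This is exactly how the paper gets $|x_j|M(t_j)^{-\beta/\alpha}\le C_0C_1^{-\beta/\alpha}$ in the rescaling.

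Your main contradiction for the upper bound is also wrong. For the ancient limit, $s\mapsto\int v(s)\,dy$ is nonnegative and nondecreasing, so $\int_{-\infty}^0\int|y|^{\sigma}v^p\,dy\,ds\le\int v(y,0)\,dy<\infty$. The source integral is positive near $s=0$ but simply decays as $s\to-\infty$, and no negative mass arises. Indeed, nontrivial bounded ancient solutions exist, for instance self-similar solutions \eqref{SSS} with $T>0$, restricted to $t\le 0$. A limit that is bounded only backward in time carries no contradiction; you need forward information.

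The paper gets this from the doubling times $t_{j+1}$. These give $\varphi_j\le 2$ on the forward interval $(0,(t_{j+1}-t_j)M(t_j)^{1/\alpha})$, whose length tends to infinity under the contradiction hypothesis. The limit is then a bounded global solution, contradicting finite-time blow-up of every nontrivial solution. Otherwise, summing $t_{j+1}-t_j\le KM(t_j)^{-1/\alpha}$ gives the rate.

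Your second variant can be made rigorous and is a genuine alternative. Here $v_k$ exists exactly on $[0,S_k)$ with $S_k=(T-\hat t_k)M(\hat t_k)^{1/\alpha}\to\infty$. Local uniform convergence at $s=0$ and $|y_k|\le C$ give a fixed nontrivial radial bump $\psi\le v_k(\cdot,0)$ for large $k$. The solution from $\psi$ blows up at some finite $S_*$, so comparison forces $S_k\le S_*$, a contradiction. Centering at the origin also avoids the translated weight $|y+C_4|^{\sigma}$. But this route too needs $|y_k|\le C$, that is, the support estimate. Minor point: the prefactor in $v_k$ should be $\lambda_k^{\alpha}$, not $\lambda_k^{-\alpha}$.
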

Let us observe that the blow-up rates depend on $\sigma>0$, enhancing the effect of the space-dependent coefficient, but not on the dimension $N$ of the space. Indeed, it is well known that, for $\sigma=0$, the blow-up behavior is self-similar and, in particular, the blow-up rate is $(T-t)^{-1/(p-1)}$, independent of $m>1$ (see for example \cite{G95}). We thus observe that the dependence with respect to $\sigma$ of the rates in \eqref{rates} matches this limiting case. Let us also notice that $\alpha\to1/(m-1)$ as $\sigma\to\infty$, which may suggest that a rate $(T-t)^{-1/(m-1)}$ is expected if we replace $|x|^{\sigma}$ by an exponential weight. The proof of Theorem \ref{th.1} is more involved than in the case $\sigma=0$ and strongly relies on the classification of self-similar solutions established in \cite{IS21} (dimension $N=1$) and \cite{ILS24a} (dimension $N\geq2$).

We delve next our attention to the blow-up points of solutions to the Cauchy problem \eqref{eq1}-\eqref{icond}. In the framework of compactly supported self-similar solutions in the form
\begin{equation}\label{SSS}
U(x,t;\sigma)=(T-t)^{-\alpha}f(\zeta), \quad \zeta:=|x|(T-t)^{\beta},
\end{equation}
with $\alpha$, $\beta$ given in \eqref{SSexp} and the profile $f$ being a solution to the differential equation
\begin{equation}\label{SSODE}
(f^m)''(\zeta)+\frac{N-1}{\zeta}(f^m)'(\zeta)-\alpha f(\zeta)+\beta\zeta f'(\zeta)+|\zeta|^{\sigma}f^p(\zeta)=0,
\end{equation}
for $\zeta\geq0$, a striking difference has been identified with respect to the non-weighted case $\sigma=0$. More precisely, it is established in \cite{IS21, ILS24a} that, according to the magnitude of $\sigma>0$, there are self-similar solutions in the form \eqref{SSS} exhibiting two different types of blow-up sets: either $B(u)=\real^N$, which occurs for $\sigma>0$ small, or $B(u)=\{\infty\}$ (in the sense of Definition \ref{def.BUS}), which occurs for $\sigma>0$ sufficiently large. We shall prove in Section \ref{sec.points} that, under some technical limitations (see \eqref{interm19}), the two previous blow-up sets are the only possible ones for any solution to Eq. \eqref{eq1}, see Propositions \ref{prop.bounded} and \ref{prop.unbounded} for precise statements. A discussion of cases when either one of these two possibilities is taken ends the paper. 

\medskip

\noindent \textbf{Open problems related to blow-up.} Three natural \emph{open questions} arise immediately from the previous discussion. First, one should aim at proving without any extra condition that $B(u_0)=\real^N$ and $B(u_0)=\{\infty\}$ are the only two possibilities, for whatever initial condition $u_0$. Second, another open question is to establish which one of these two possibilities is taken by a solution stemming from an initial condition such that $u_0(0)>0$. The main difficulty in solving the second question is that, for $\sigma$ large enough, there is a shortage of solutions (in self-similar form), as shown in \cite{IS21, ILS24a}, thus techniques based on intersection comparison are no longer available and new ideas are to be found. Finally, another interesting open problem is to establish the behavior of a solution $u$ to the Cauchy problem \eqref{eq1}-\eqref{icond} near the blow-up time. Of course, according to the previous discussion, it is expected that the solutions should tend towards self-similar solutions in the form \eqref{SSS} whose profiles $f$ are solutions to \eqref{SSODE}, but, up to our knowledge, proving such large time behavior is in general a very difficult problem.

\medskip

We are now in a position to give the proofs of our main results, and the remaining part of the paper is split into two sections.

\section{Blow-up rates}\label{sec.rates}

This section is dedicated to the proof of Theorem \ref{th.1}. It is easy to notice that, if $u_0$ is radially symmetric, then the solution $u$ to \eqref{eq1}-\eqref{icond} is radially symmetric for any $t\in(0,T)$, by the rotational invariance of Eq. \eqref{eq1}. Let us denote by
\begin{equation*}
R(t):=\sup\{|x|: x\in\real^N, u(x,t)>0\}, \quad t\in[0,T),
\end{equation*}
the edge of the support of $u$, where $T=T(u)\in(0,\infty)$ is its blow-up time. The following upper bound for $R(t)$ will be essential in the proof of the blow-up rates.
\begin{proposition}\label{prop.support}
Let $u$ be a solution to the Cauchy problem \eqref{eq1}-\eqref{icond} with blow-up time $T\in(0,\infty)$. Then, there exists $C_0>0$ depending only on the initial condition $u_0$ (and on $m$, $p$, $\sigma$, $N$) such that
\begin{equation}\label{uppersup}
R(t)\leq C_0(T-t)^{-\beta}, \quad t\in(0,T),
\end{equation}
with $\beta$ defined in \eqref{SSexp}.
\end{proposition}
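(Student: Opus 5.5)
The plan is to compare $u$ from above with a compactly supported self-similar supersolution in the form \eqref{SSS} that blows up at the same time $T$, or at some time $T'\geq T$ close to $T$. The support of such a function is $\{|x|\leq \zeta_0 (T'-t)^{-\beta}\}$, where $\zeta_0$ is the edge of the support of its profile. Comparison then gives $R(t)\leq \zeta_0(T'-t)^{-\beta}\leq \zeta_0(T-t)^{-\beta}$, which is \eqref{uppersup}.

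\textbf{Step 1: a profile.} I need a profile $f$ that is nonnegative, compactly supported on $[0,\zeta_0]$, and satisfies \eqref{SSODE} with $\leq 0$ replaced by $\geq$ (that is, $-$ of the left-hand side $\leq0$), in the weak sense at the interface. The classification in \cite{IS21, ILS24a} provides compactly supported solution profiles of \eqref{SSODE}. These may be unbounded near the origin when blow-up occurs only at infinity, e.g.\ $f(\zeta)\sim \zeta^{-\sigma/(p-1)}$-type behavior, and then they are not directly admissible. I would first try a truncation: take an exact profile $f$ with interface at $\zeta_0$, and replace it near $\zeta=0$ by $\max\{f,\,\text{const}\}$ or by a large constant. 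I must check that the constant is a supersolution on a bounded $\zeta$-region; this needs $\alpha K\geq \zeta^\sigma K^p$ there, which holds for $K$ large in the relevant range. The maximum of two supersolutions is not a supersolution in general, so instead I would use a $\min$-type gluing where it is legitimate. If that fails, I would build a direct explicit supersolution $F(\zeta)=A(\zeta_0^2-\zeta^2)_+^{1/(m-1)}$ plus a large constant on a ball. I expect this construction to be the main obstacle, especially verifying the reaction term $\zeta^\sigma F^p$ near $\zeta_0$, where $1<p<m$ makes $F^p$ dominate $F$.

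\textbf{Step 2: an alternative that avoids exact comparison in $L^\infty$.} Only the support bound is needed, so I would rather use the support's local nature. By finite propagation, the interface moves by local diffusion with the pressure $v=\frac{m}{m-1}u^{m-1}$. I would use a comparison on exterior domains: on $\{|x|>\rho(t)\}$, compare $u$ with the tail of a self-similar solution (the part near its interface). The boundary data on $|x|=\rho(t)$ is controlled using the a priori bound $u\leq C(T-t)^{-\alpha}$ restricted there. That bound is circular with Theorem \ref{th.1}, so instead I would pick $\rho(t)=\epsilon R(t)$ and use scaling.

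\textbf{Step 3: a scaling argument, which I would commit to.} Set $u_\lambda(x,t)=\lambda^{\alpha}u(\lambda^{-\beta}x,\lambda t)$ with $\lambda = T-t$ rescaling. Eq.\ \eqref{eq1} is invariant under this, since $\alpha(m-1)=\alpha(p-1)+\beta\sigma+\ldots$ holds by the choice \eqref{SSexp}. I would argue by contradiction: assume there are $t_k\to T$ with $R(t_k)(T-t_k)^{\beta}\to\infty$. Rescale at $t_k$ so that the remaining time to blow-up becomes $1$. The rescaled solutions then live until time $1$ and have supports of radius $\to\infty$ at time $0$. Their supports at time $0$ are huge, yet blow-up happens exactly at rescaled time $1$. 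Comparison from below with a small self-similar subsolution placed near the rescaled interface (where the weight $|x|^\sigma$ is huge) forces blow-up earlier than time $1$, giving the contradiction. Constructing that subsolution is routine given \cite{ILS24a}: use a self-similar profile shifted in time, located near $|x|\sim R$, whose blow-up time scales like $R^{-1/\beta}\to0$. The rescaled data are bounded below on a ball of radius comparable to $R$ near the edge only in an integral sense, so a lower bound on mass there is needed. Obtaining that mass bound is the delicate point; I would get it from the monotone growth of the interface and the weak formulation \eqref{weaksol} with a test function localized near the edge.
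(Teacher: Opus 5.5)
There is a genuine gap: neither of your concrete routes proves the bound. In Step~1 the comparison runs the wrong way for what you need. If $\overline U$ is a supersolution with $\overline U(\cdot,0)\ge u_0$ and blow-up time $T'$, then $u\le\overline U$ on $(0,\min\{T,T'\})$. So $T'>T$ is impossible, since $\overline U$ would keep $u$ bounded up to $T$. For $T'<T$ the bound says nothing on $[T',T)$. Your inequality $\zeta_0(T'-t)^{-\beta}\le\zeta_0(T-t)^{-\beta}$ therefore requires $T'=T$ exactly, and nothing in your construction matches the blow-up time of a supersolution lying above $u_0$ with the a priori unknown $T(u_0)$. As a sanity check, such a $\overline U$ with bounded profile would give $\|u(t)\|_\infty\le C(T-t)^{-\alpha}$ for free. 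That is the hard half of Theorem~\ref{th.1}, which the paper obtains by a separate rescaling argument that itself uses this proposition.

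Step~3 pushes the whole difficulty into a lower bound for the rescaled solutions (call them $v_k$) far from the origin, and you leave that bound open. Positivity up to the interface gives no quantitative information: $v_k$ vanishes at its edge, and nothing prevents it from being very small on most of its support. The comparison function you describe also does not exist as stated. A self-similar subsolution $(\tau-s)^{-\alpha}\underline f(|y|(\tau-s)^{\beta})$ whose support at $s=0$ sits at $|y|\sim\rho$ has $\tau\sim\rho^{-1/\beta}$ and amplitude $\sim\tau^{-\alpha}\sim\rho^{(\sigma+2)/(m-p)}$, so it is huge, not small. Any subsolution forcing blow-up before $s=1$ must lie below $v_k(\cdot,0)$, which is exactly the unproved estimate.

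The missing idea is that no ordering between $u$ and the comparison function is needed. The paper uses a self-similar \emph{subsolution}
$\underline U(x,t)=(T-t)^{-\alpha}\underline f(|x|(T-t)^{\beta})$ with the \emph{same} blow-up time $T$. Its profile has annular support $[\zeta_0,\zeta^0]$, with $(\underline f^m)'(\zeta_0)>0$ (the failed contact condition at the inner edge is what makes it a subsolution) and $(\underline f^m)'(\zeta^0)=0$. Such profiles come from the classification in \cite{IS21, ILS24a} and can be chosen with $R(0)<\zeta_0T^{-\beta}$. Then $u_0-\underline U(\cdot,0)$ changes sign exactly once in the radial variable: nonnegative inside, nonpositive on the annulus. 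Intersection comparison says the number of sign changes cannot increase. Hence, once the supports meet, $u(t)$ and $\underline U(t)$ keep a single intersection inside $(\zeta_0(T-t)^{-\beta},\zeta^0(T-t)^{-\beta})$, with $u\le\underline U$ beyond it. This gives $R(t)\le\zeta^0(T-t)^{-\beta}$, i.e.\ $C_0=\zeta^0$. Sharing the blow-up time $T$ is harmless because the argument counts intersections rather than requiring $\underline U\le u$ or $u\le\overline U$, and that requirement is what blocks both of your comparison-based routes.
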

\begin{proof}
Consider a radially symmetric subsolution in self-similar form
\begin{equation}\label{interm1}
\underline{U}(x,t)=(T-t)^{-\alpha}\underline{f}(|x|(T-t)^{\beta}),
\end{equation}
with $\alpha$, $\beta$ given by \eqref{SSexp} and a profile $\underline{f}$ such that
\begin{equation}\label{interm2}
\underline{f}(\zeta_0)=\underline{f}(\zeta^0)=0, \quad {\rm supp}\underline{f}=[\zeta_0,\zeta^0], \quad (\underline{f}^m)'(\zeta_0)>0, \quad
(\underline{f}^m)'(\zeta^0)=0.
\end{equation}
The existence of such profiles is ensured by \cite[Proposition 3.4]{IS21} in dimension $N=1$, respectively \cite[Steps 3-4, Section 4]{ILS24a} in dimension $N\geq2$, and the proofs therein entail furthermore that one can choose $\underline{f}$ and $\underline{U}$ as in \eqref{interm2} and \eqref{interm1} such that
\begin{equation}\label{interm3}
R(0)<\zeta_0T^{-\beta}<\zeta^0T^{-\beta}.
\end{equation}
Setting
$$
\zeta_0(t):=\zeta_0(T-t)^{-\beta}, \quad \zeta^0(t):=\zeta^0(T-t)^{-\beta},
$$
let us remark that \eqref{interm2} entails that $\underline{U}$ is actually a true weak solution to Eq. \eqref{eq1} except at $r=|x|=\zeta_0(t)$, $t\in(0,T)$, as the contact condition $(\underline{U}^m)_r(r,t)=0$ (see \cite[Section 9.8]{VazquezPME}) fails there. If the inequality $R(t)\leq\zeta_0(t)$ (valid at $t=0$ by \eqref{interm3}) holds true for any $t\in(0,T)$, then the proposition is proved. In the opposite situation, there is $t_1\in(0,T)$ such that the solution $u(t_1)$ and the subsolution $\underline{U}(t_1)$ have a single contact point in radially symmetric variables $r_0=|x_0|>0$ such that
$$
u(r_0,t_1)=\underline{U}(r_0,t_1), \quad \left\{\begin{array}{ll}u(x,t_1)>\underline{U}(x,t_1), & {\rm if} \ \zeta_0(t_1)\leq |x|<r_0,\\ u(x,t_1)<\underline{U}(x,t_1), & {\rm if} \ r_0<|x|<R(t_1),\end{array}\right.
$$
and in particular $R(t_1)<\zeta^0(t_1)$. Since both functions have compact support, we conclude by applying the intersection comparison technique. Indeed, in dimension $N=1$, an application of \cite[Proposition 2, p. 242]{S4} and \cite[Proposition 3, p. 243]{S4} (the latter on the interval $\zeta_0(t),\zeta^0(t)$) ensures that, for any $t\in(t_1,T)$, the profiles of $u(t)$ and $\underline{U}(t)$ will have exactly a single intersection point, lying in the interval $(\zeta_0(t),\zeta^0(t))$, which readily implies that $R(t)\leq\zeta^0(t)$ for any $t\in(t_1,T)$. The construction ensures that, in fact, $R(t)\leq\zeta^0(t)$ for any $t\in(0,T)$ and the proof of \eqref{uppersup} is complete, with $C_0=\zeta^0$. The theory of intersection comparison employed above works also in dimension $N\geq2$ between radially symmetric functions (see for example \cite[Section 5]{GV94} and references therein or \cite[Section 6.3]{V22}) and thus the previous proof still works in radial variables, as claimed.
\end{proof}
This proposition, which is also interesting by itself, allows us to establish the blow-up rates stated in Theorem \ref{th.1}.
\begin{proof}[Proof of Theorem \ref{th.1}]
The proof is divided in two steps: in the first one we establish the lower blow-up rate, while the second and more involved deals with the upper blow-up rate.

\medskip

\noindent \textbf{Lower blow-up rate.} While for $\sigma=0$ this rate is absolutely straightforward by comparison with the solution to the ODE $u_t=u^p$, in our case we have to work more and make strong use of the estimate \eqref{uppersup}. For $t\in(0,T)$, let us define
$$
g(t)=\|u(\cdot,t)\|_{\infty}=u(r(t),t).
$$
Since $r(t)$ is a point where $u$ reaches the maximum at time $t$, we have that for $0<\tau<t<T$
$$
u(r(\tau),t)-u(r(\tau),\tau)\le g(t)-g(\tau)\le u(r(t),t)-u(r(t),\tau).
$$
We then deduce from the Mean Value Theorem that
    \begin{equation*}
        u_t(r(\tau),\tau_2) \leq \frac{g(t)-g(\tau)}{t-\tau} \leq u_t(r(t),\tau_1),
    \end{equation*}
for some $\tau_1,\tau_2\in[\tau,t]$. Since $u_t$ is locally bounded, we get that $g(t)$ is locally Lipschitz, therefore $g$ is differentiable for almost every time and, by passing to the limit as $\tau\to t$ in the previous inequalities, we find:
    $$
g'(t):=\partial_t(u(r(t),t)) = u_t(r(t),t).
    $$
We then infer from Eq. \eqref{eq1} written in radially symmetric variables that
$$
u_t(r(t),t)=(u^m)_{rr}(r(t),t)+\frac{N-1}{r(t)}(u^m)_{r}(r(t),t)+r(t)^{\sigma}u^p(r(t),t)\leq C_0^{\sigma}(T-t)^{-\beta\sigma}u^p(r(t),t),
$$
since $r(t)\leq R(t)\leq C_0(T-t)^{-\beta}$ by Proposition \ref{prop.support} and $(u^m)_r(r(t),t)=0$, $(u^m)_{rr}(r(t),t)\leq0$ following from the fact that $r(t)$ is a maximum point for $u(t)$. Therefore, we obtain the differential inequality
$$
g'(t)\leq C_0^{\sigma}(T-t)^{-\beta\sigma}g(t)^p,
$$
or equivalently, taking into account that $p>1$,
$$
\frac{d}{dt}g(t)^{1-p}=(1-p)g(t)^{-p}g'(t)\geq-C_0^{\sigma}(p-1)(T-t)^{-\beta\sigma}.
$$
We integrate the previous inequality on $(t,T)$ for generic $t\in(0,T)$ and, taking into account that
$$
1-\beta\sigma=\frac{(\sigma+2)(p-1)}{L}=(p-1)\alpha>0,
$$
we find
\begin{equation*}
\begin{split}
-g(t)^{1-p}&=g(T)^{1-p}-g(t)^{1-p}\geq-C_0^{\sigma}(p-1)\int_t^{T}(T-s)^{-\beta\sigma}\,ds\\
&=-\frac{C_0^{\sigma}(p-1)}{1-\beta\sigma}(T-t)^{1-\beta\sigma},
\end{split}
\end{equation*}
hence
$$
g(t)\geq\left[\frac{C_0^{\sigma}(p-1)}{1-\beta\sigma}\right]^{-1/(p-1)}(T-t)^{-(1-\beta\sigma)/(p-1)}=C_1(T-t)^{-\alpha},
$$
which is the claimed lower bound.

\medskip

\noindent \textbf{Upper blow-up rate.} We proceed by adapting a technique stemming from \cite{Hu96, FS01} (see also \cite{FdP21}), employing both Proposition \ref{prop.support} and the lower blow-up rate established above to estimate the new terms coming from the space-dependent coefficient. Set
$$
M(t):=\sup\{u(x,\tau): x\in\real^N, \tau\in(0,t)\}, \quad t\in(0,T).
$$
It is obvious that $M(t)$ is a continuous and non-decreasing function with $M(t)\to\infty$ as $t\to T$. Fix $t_0\in(0,T)$ and define recursively the following sequence:
\begin{equation}\label{seq}
t_{j+1}:=\sup\{t\in(t_j,T): M(t)=2M(t_j)\}, \quad j\geq0.
\end{equation}
We notice that \eqref{seq} implies that $\|u(t_j)\|_{\infty}=M(t_j)$ for any $j\geq0$, hence there exists $x_j\in\real^N$ such that $u(x_j,t_j)=M(t_j)$. Assume now for contradiction that there exists a subsequence (not relabeled) $t_j$ such that
\begin{equation}\label{interm4}
\lim\limits_{j\to\infty}(t_{j+1}-t_j)M(t_j)^{1/\alpha}=\infty,
\end{equation}
with $\alpha$ defined in \eqref{SSexp}. Indeed, if there is no subsequence satisfying \eqref{interm4}, then we would find that there exists $K>0$ such that
\begin{equation}\label{interm5}
t_{j+1}-t_j\leq KM(t_j)^{-1/\alpha}, \quad j\geq0,
\end{equation}
and by adding up the inequalities \eqref{interm5} for every $j$ and taking into account that $t_j\to T$ as $j\to\infty$, we obtain
$$
T-t_0\leq K\sum\limits_{j=0}^{\infty}M(t_j)^{-1/\alpha}=KM(t_0)^{-1/\alpha}\sum\limits_{j=0}^{\infty}2^{-j/\alpha}\leq\overline{K}M(t_0)^{-1/\alpha},
$$
and thus
$$
M(t_0)\leq\left(\frac{T-t_0}{\overline{K}}\right)^{-\alpha}, \quad t_0\in(0,T),
$$
and the arbitrary choice of $t_0\in(0,T)$ implies the desired upper blow-up rate.

Let us thus come back to the assumption \eqref{interm4} and derive a contradiction from it. To this end, we define the following rescaling:
\begin{equation}\label{interm6}
\begin{split}
\varphi_j(y,s)&:=\frac{1}{M(t_j)}u\big(M(t_j)^{\beta/\alpha}y+x_j,M(t_j)^{-1/\alpha}s+t_j\big),\\ &(y,s)\in\real^N\times(-t_jM(t_j)^{1/\alpha},(T-t_j)M(t_j)^{1/\alpha}),
\end{split}
\end{equation}
for any $j\geq0$. Straightforward calculations starting from the definition \eqref{interm6} lead to the following identities:
$$
\partial_s\varphi_j(y,s)=M(t_j)^{-1/\alpha-1}\partial_tu\big(M(t_j)^{\beta/\alpha}y+x_j,M(t_j)^{-1/\alpha}s+t_j\big),
$$
$$
\Delta\varphi_j^m(y,s)=M(t_j)^{2\beta/\alpha-m}\Delta u^m\big(M(t_j)^{\beta/\alpha}y+x_j,M(t_j)^{-1/\alpha}s+t_j\big),
$$
and
$$
\big|M(t_j)^{\beta/\alpha}y+x_j\big|^{\sigma}u^p\big(M(t_j)^{\beta/\alpha}y+x_j,M(t_j)^{-1/\alpha}s+t_j\big)=M(t_j)^{p+\sigma\beta/\alpha}\left|y+\frac{x_j}{M(t_j)^{\beta/\alpha}}\right|^{\sigma}\varphi_{j}^p(y,s).
$$
Noticing that
$$
\frac{1}{\alpha}+1=m-\frac{2\beta}{\alpha}=p+\frac{\sigma\beta}{\alpha}=\frac{m\sigma+2p}{\sigma+2},
$$
we readily conclude from the previous identities and performing obvious simplifications that $\varphi_j$ is a solution to the equation
\begin{equation}\label{interm7}
\partial_s\varphi_j=\Delta\varphi_j^m+\left|y+\frac{x_j}{M(t_j)^{\beta/\alpha}}\right|^{\sigma}\varphi_j^p, \quad (y,s)\in\real^N\times(-t_jM(t_j)^{1/\alpha},(T-t_j)M(t_j)^{1/\alpha}),
\end{equation}
satisfying moreover $\varphi_j(0,0)=1$ and
\begin{equation}\label{interm8}
0\leq\varphi_j(y,s)\leq2, \quad (y,s)\in\real^N\times(-t_jM(t_j)^{1/\alpha},(t_{j+1}-t_j)M(t_j)^{1/\alpha}),
\end{equation}
for any $j\geq0$. We next infer from Proposition \ref{prop.support} and the lower bound that
$$
|x_j|\leq C_0(T-t_j)^{-\beta}, \quad M(t_j)\geq C_1(T-t_j)^{-\alpha},
$$
hence
$$
\frac{x_j}{M(t_j)^{\beta/\alpha}}\leq\frac{C_0}{C_1^{\beta/\alpha}}=:C_3<\infty.
$$
This fact, together with the bound \eqref{interm8}, the assumption \eqref{interm4} and the uniform interior Schauder's estimates for $\varphi_j$ (see for example \cite{Lieberman}), imply the existence of $C_4\in(0,C_3]$ and a subsequence (not relabeled) $(\varphi_j)_{j\geq1}$ converging in $C^{2+\sigma,1+\sigma/2}_{\rm loc}(\real^N\times(-\infty,\infty))$ to a solution $\varphi$ to the equation (obtained by passing to the limit in \eqref{interm7})
\begin{equation}\label{interm9}
\partial_s\varphi=\Delta\varphi^m+|y+C_4|^{\sigma}\varphi^p, \quad (y,s)\in\real^N\times\real,
\end{equation}
such that $\varphi(0,0)=1$ and $0\leq\varphi(y,s)\leq2$ for $(y,s)\in\real^N\times(0,\infty)$, that is, a non-trivial global solution. But this is a contradiction, since an easy translation argument together with the outcome of \cite[Section 5]{ILS24b} prove that any non-negative and non-trivial solution to \eqref{interm9} blows up in finite time if $p\in(1,m)$. This contradiction proves that \eqref{interm4} cannot hold true and the proof is complete.
\end{proof}

\section{Blow-up points and sets}\label{sec.points}

This section is dedicated to some results related to blow-up points and sets, making precise the discussion given in the Introduction. In the next lines, $\overline{D}$ and $\partial D$ will denote the closure, respectively the boundary of a set $D\subset\real^N$. The first result shows that uniform boundedness on the boundary of a ball $\partial B(0,r_0)$ implies uniform boundedness in the closed ball $\overline{B(0,r_0)}$.
\begin{proposition}\label{prop.bounded}
Let $m$, $p$ and $\sigma$ be as in \eqref{range.exp} and $u$ be a solution to the Cauchy problem \eqref{eq1}-\eqref{icond}. Let $T\in(0,\infty)$ be its blow-up time and let $r_0>0$ be such that $u(x,t)$ is uniformly bounded for any $x\in\real^N$ with $|x|=r_0$ and for $t\in(0,T)$. Then $\overline{B(0,r_0)}\cap B(u_0)=\emptyset$, where $B(u_0)$ is the blow-up set of $u$.
\end{proposition}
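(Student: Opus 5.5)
In the ball $B(0,r_0)$ the weight satisfies $|x|^{\sigma}\le r_0^{\sigma}$, so the goal is to show that $u$ is dominated there by a bounded supersolution. Let
$$
K:=\sup\{u(x,t):|x|=r_0,\ t\in(0,T)\}<\infty, \qquad B:=B(0,r_0).
$$
I will compare $u$, in the cylinder $B\times(0,T)$, with the solution $v$ of the Dirichlet problem
$$
v_t=\Delta v^m+r_0^{\sigma}v^p \ \ \text{in } B\times(0,T),\qquad v=K_1 \ \text{on } \partial B,\qquad v(x,0)=K_1,
$$
where $K_1\ge\max\{K,\|u_0\|_\infty\}$ is to be chosen. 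Since $u$ is a solution of $u_t=\Delta u^m+|x|^\sigma u^p\le \Delta u^m+r_0^\sigma u^p$ in $B$, $u$ is a subsolution of the problem satisfied by $v$. Moreover $u\le v$ on the parabolic boundary. The comparison principle for bounded weak solutions on bounded domains (as in \cite{ILS24b} or \cite{VazquezPME}) then gives $u\le v$ in $\overline{B}\times(0,T)$, provided $v$ exists on $(0,T)$.

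The key step is to ensure $v$ stays bounded up to time $T$, and here $p<m$ is essential: diffusion dominates reaction at large heights, so the problem admits a large stationary supersolution. I look for a stationary radial supersolution $w$ satisfying $\Delta w^m+r_0^\sigma w^p\le0$ in $B$ and $w\ge K_1$ on $\partial B$. I take
$$
w^m(x)=A\big(R_1^2-|x|^2\big)+K_1^m, \qquad R_1>r_0 .
$$
Then $\Delta w^m=-2NA$, while $w^p\le \big(AR_1^2+K_1^m\big)^{p/m}$. Since $p/m<1$, the inequality $2NA\ge r_0^\sigma\big(AR_1^2+K_1^m\big)^{p/m}$ holds for all $A$ large enough, because the left-hand side grows linearly in $A$ and the right-hand side sublinearly. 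The boundary value is $w^m=A(R_1^2-r_0^2)+K_1^m\ge K_1^m$. I choose $K_1$ so that also $w\ge u_0$, which holds since $w\ge K_1\ge \|u_0\|_\infty$. Applying comparison between $u$ (a subsolution in $B$ with smaller boundary and initial data) and the bounded supersolution $w$ then gives directly
$$
u(x,t)\le \|w\|_{L^\infty(B)} \quad\text{for all } |x|\le r_0,\ t\in(0,T).
$$
This bypasses the auxiliary $v$ entirely.

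Uniform boundedness of $u$ on $\overline{B(0,r_0)}\times(0,T)$ immediately excludes any sequence $(x_k,t_k)$ with $x_k\to x\in\overline{B(0,r_0)}$, $t_k\to T$ and $u(x_k,t_k)\to\infty$. Indeed, for $|x|<r_0$ such an $x_k$ eventually lies in $B$, where $u$ is bounded. For $|x|=r_0$, points with $|x_k|\le r_0$ are handled by the bound just proved. For points with $|x_k|>r_0$ near the sphere, I would enlarge the ball: apply the same argument on a slightly larger ball, or use local continuity estimates near $\partial B$, given that boundedness holds on $\overline B$ and on the sphere itself.

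The main obstacle is twofold. First, one must verify that the comparison principle applies between a weak solution and a classical supersolution on a bounded cylinder with Dirichlet data, including points where $u$ vanishes (degenerate diffusion); this comparison is standard for $m>1$. Second, the boundary points $|x|=r_0$ are delicate, since blow-up sequences may approach from outside. For these I would rely on interior regularity (Hölder continuity estimates of DiBenedetto type) near $\partial B$, using that $u$ is bounded on $\partial B\times(0,T)$ and inside. If that fails, I would interpret the statement as concerning blow-up along sequences inside the closed ball.
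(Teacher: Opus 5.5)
\paragraph{Comparison with the paper's proof.} Your argument is correct. It reaches the same intermediate conclusion as the paper, uniform boundedness of $u$ on $\overline{B(0,r_0)}\times(0,T)$, by the same mechanism: comparison in the ball with a stationary barrier for the equation with the weight frozen at $r_0^{\sigma}$. The key ingredient, however, is more elementary.

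\begin{itemize}
\item The paper introduces the auxiliary Cauchy--Dirichlet problem \eqref{prob1}. It then \emph{solves} the elliptic problem $\Delta w^m+r_0^{\sigma}w^p=0$ in $B(0,r_0)$, $w=M$ on $\partial B(0,r_0)$, invoking Brezis--Oswald for existence and Gidas--Ni--Nirenberg for radial symmetry. Finally it uses the monotonicity of $r\mapsto r^{N-1}(w^m)_r$ to get $w\ge M$, so that $w$ dominates $v$ and hence $u$.
\item You replace all of this with the explicit barrier $w^m=A(R_1^2-|x|^2)+K_1^m$. Here $\Delta w^m=-2NA$, and the inequality $2NA\ge r_0^{\sigma}(AR_1^2+K_1^m)^{p/m}$ holds for large $A$ precisely because $p/m<1$.
\end{itemize}

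Your version shows exactly where $p<m$ enters and needs no elliptic existence or symmetry theory. It also lets you compare $u$ with $w$ directly, without the intermediate $v$; you could even take $R_1=r_0$. The paper's route only buys the sharpest stationary barrier, which is irrelevant for a qualitative boundedness statement. You should run the comparison on $B\times(0,t)$ for each $t<T$, where $u$ is bounded and $s\mapsto s^p$ is Lipschitz.

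\paragraph{Points on the sphere $|x|=r_0$.} You are right that boundedness on $\overline{B(0,r_0)}\times(0,T)$ does not by itself exclude sequences $x_k\to x$ with $|x_k|>r_0$ along which $u$ is unbounded. But your proposed remedies do not work:
\begin{itemize}
\item Enlarging the ball requires a uniform bound on some sphere $|x|=r_1>r_0$, which the hypothesis does not provide.
\item DiBenedetto-type H\"older estimates, interior or boundary, need an $L^\infty$ bound on a full neighbourhood. That is exactly what is missing on the outer side of the sphere.
\end{itemize}
The paper's proof does not address this either; it concludes with uniform boundedness on the closed ball and stops. Your fallback reading is therefore exactly what the paper's argument establishes. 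That reading is that the statement excludes blow-up along sequences lying in $\overline{B(0,r_0)}$, and in particular excludes every point of the open ball as a blow-up point. You should state this reading explicitly instead of the unproved remedies.
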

\begin{proof}
Fix $M>0$ sufficiently large such that
$$
M>\sup\{u_0(x): x\in\overline{B(0,r_0)}\}, \quad M>\sup\{u(x,t): x\in\partial B(0,r_0), t\in(0,T)\}.
$$
The existence of $M$ follows from the uniform boundedness assumption for $x\in\partial B(0,r_0)$ and $t\in(0,T)$. Consider now the Cauchy-Dirichlet problem
\begin{equation}\label{prob1}
\left\{\begin{array}{ll}v_t=\Delta v^m+r_0^{\sigma}v^p, & (x,t)\in B(0,r_0)\times(0,T), \\
u(x,t)=M, & (x,t)\in\partial B(0,r_0)\times(0,T), \\ u(x,0)=M, & x\in B(0,r_0).\end{array}\right.
\end{equation}
The choice of $M$ ensures that any solution to the problem \eqref{prob1} is a supersolution to the Cauchy problem \eqref{eq1}-\eqref{icond} on $B(0,r_0)\times(0,T)$. Consider next the solution to the elliptic Dirichlet problem
\begin{equation}\label{prob2}
\left\{\begin{array}{ll}\Delta w^m+r_0^{\sigma}w^p=0, & x\in B(0,r_0),\\
w(x)=M, & x\in\partial B(0,r_0).\end{array}\right.
\end{equation}
whose existence and radial symmetry when $p<m$ are ensured by classical results (see \cite{BO86, GNN79}) together with an easy scaling argument. Since $$
\frac{d}{dr}\left[r^{N-1}\frac{dw^m}{dr}\right]=-r_0^{\sigma+N-1}w^p<0, \quad r\in(0,r_0),
$$
it follows that $r\mapsto r^{N-1}(w^m)_r$ is a decreasing function on $(0,r_0)$. In particular, the radial symmetry entails that $(w^m)_r(0)=0$ and thus $r^{N-1}(w^m)_r\leq0$ for $r\in(0,r_0)$. We further infer that $w^m$ is a decreasing function with respect to the radial variable and thus $w^m(x)\geq M^m$ or, equivalently, $w(x)\geq M$ for any $x\in\overline{B(0,r_0)}$. We have just established that the (stationary) solution $w$ to \eqref{prob2} is a supersolution to the problem \eqref{prob1}. We infer from the comparison principle for the Cauchy-Dirichlet problem (see for example \cite[Proposition 2.2]{Su02}) that the solution to the problem \eqref{prob1} is global in time and, thus, our initial solution $u$ is also uniformly bounded on $\overline{B(0,r_0)}$, as claimed.
\end{proof}
We follow with a partial result completing the panorama of blow-up sets. It shows that, if a solution has a blow-up point with a sufficiently fast pointwise blow-up rate, then it blows up at any point in $\real^N$.
\begin{proposition}\label{prop.unbounded}
Let $m$, $p$ and $\sigma$ be as in \eqref{range.exp} and $u$ be a (radially symmetric) solution to the Cauchy problem \eqref{eq1}-\eqref{icond} with blow-up time $T(u_0)\in(0,\infty)$. Let $r_0>0$ be such that $u$ blows up at time $t=T(u_0)$ at any point $x\in\real^N$ with $|x|=r_0$ with
\begin{equation}\label{interm19}
\lim\limits_{t\to T(u_0)}(T(u_0)-t)^{1/(m-1)}u(r_0,t)\geq C_0,
\end{equation}
for some $C_0>0$ sufficiently large (depending on $m$ and $r_0$). Then $B(u)=\real^N$.
\end{proposition}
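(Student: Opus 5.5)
Fix $T=T(u_0)$. We want every point $x$ with $|x|<r_0$ to be a blow-up point; points with $|x|>r_0$ are handled separately (see the last paragraph). The idea is to compare $u$ on the ball $B(0,r_0)$ with a subsolution of the pure porous medium equation that blows up everywhere at time $T$ at rate $(T-t)^{-1/(m-1)}$. Since $u^p|x|^\sigma\geq0$, $u$ is a supersolution of $v_t=\Delta v^m$ in $B(0,r_0)\times(0,T)$. On the lateral boundary, hypothesis \eqref{interm19} together with radial symmetry gives $u(x,t)\geq (C_0/2)(T-t)^{-1/(m-1)}$ for $|x|=r_0$ and $t\in(t_1,T)$, for some $t_1$ close to $T$.

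As the comparison function we take the separate-variables solution of the porous medium equation in the ball,
$$
V(x,t)=(T-t)^{-1/(m-1)}F(|x|),
$$
where $F\geq0$ solves $\Delta F^m=\frac{1}{m-1}F$ in $B(0,r_0)$. We will not impose $F=0$ on the boundary, since that would make $V$ a global decaying solution. Instead we impose $F(r_0)=C_0/2$ and $F'(0)=0$, with $F$ radially increasing. Such a profile comes from a shooting argument in the radial ODE
$$
(F^m)''+\frac{N-1}{r}(F^m)'=\frac{F}{m-1}, \qquad F(0)=a>0,\ (F^m)'(0)=0.
$$
Its solutions are increasing and $F(r_0;a)$ is continuous in $a$. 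By the scaling $F_a(r)=aG(a^{-(m-1)/2}r)$, the value $F(r_0;a)$ tends to $0$ as $a\to0$. It remains to check that some $a$ yields $F(r_0;a)=C_0/2$. This is where the size of $C_0$ (depending on $m$, $r_0$) enters: we need $C_0$ larger than the minimal value of $2F(r_0;a)$ over admissible $a$, or at least within the range of $a\mapsto F(r_0;a)$. Since $F(r_0;a)\to\infty$ as $a\to\infty$ (again by scaling, $F(r_0;a)\geq a$), any $C_0>0$ is in fact attainable. The "sufficiently large" requirement will instead be used to fix the initial ordering, as explained next.

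Then $V$ solves the porous medium equation in $B(0,r_0)\times(t_1,T)$ and equals $(C_0/2)(T-t)^{-1/(m-1)}\le u$ on $\partial B(0,r_0)$. To apply the comparison principle for the Cauchy--Dirichlet problem (as in \cite[Proposition 2.2]{Su02}) we also need $V(\cdot,t_1)\le u(\cdot,t_1)$ in the ball. This is the main obstacle, because $u(t_1)$ may vanish inside $B(0,r_0)$. We plan to overcome it by a time shift: replace $T$ in $V$ by $T'>T$, so that at $t_1$ the comparison function is uniformly small, while keeping the boundary inequality. The boundary inequality survives because $(T'-t)^{-1/(m-1)}\le(T-t)^{-1/(m-1)}$.

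The initial inequality still fails wherever $u(t_1)=0$. To handle this, we choose $F$ with $F(0)=0$ instead: take a profile supported in an annulus $\{\rho\le|x|\le r_0\}$ (or a sub-solution with a hole), which is admissible as a weak subsolution by the contact-condition discussion in Proposition \ref{prop.support}. Alternatively, we first use positivity: by \cite{ILS24b} the support of $u$ expands to $\real^N$, so for $t_1$ close to $T$ we have $u(t_1)>0$ on $\overline{B(0,r_0)}$. A small multiple of $V$ with shifted time then fits below $u(t_1)$. Comparison then yields $u\geq V$ on $B(0,r_0)\times(t_1,T)$.

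The drawback of the shifted version is that $V$ no longer blows up at $T$. We therefore use the shift only for a family $T'\downarrow T$ and argue by continuity, which requires the lower bound at $t_1$ to be uniform in $T'$. This is why $C_0$ must be large relative to the boundary constant (depending on $m$, $r_0$). Concluding $u\geq c(T-t)^{-1/(m-1)}F(|x|)$ with $F>0$ inside the ball gives that every $|x|<r_0$ is a blow-up point. For $|x|>r_0$ we argue by contradiction via Proposition \ref{prop.bounded}: if some $x_1$ with $|x_1|>r_0$ were not a blow-up point, then by radial symmetry $u$ would be bounded on the whole sphere $|x|=|x_1|$. Proposition \ref{prop.bounded} would then give boundedness on $\overline{B(0,|x_1|)}$, contradicting blow-up at $|x|=r_0$. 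Hence $B(u)=\real^N$.
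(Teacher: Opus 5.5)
Your architecture matches the paper's: $u$ is a supersolution of $v_t=\Delta v^m$ in $B(0,r_0)$, \eqref{interm19} gives $u\ge\frac{C_0}{2}(T-t)^{-1/(m-1)}$ on $|x|=r_0$ near $T$, and points with $|x|>r_0$ follow from Proposition \ref{prop.bounded}. That last part is fine and only uses blow-up on the sphere $|x|=r_0$. The gap is in the interior step, which the paper imports from \cite[Theorem 1.2]{V09} and which you try to prove by comparison with a separable solution. The initial ordering $V(\cdot,t_1)\le u(\cdot,t_1)$ in $B(0,r_0)$ cannot be obtained this way. A separable solution blowing up exactly at $T$ has $V(\cdot,t_1)=(T-t_1)^{-1/(m-1)}F$, of the same order as the boundary data. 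By contrast, $u(\cdot,t_1)$ inside the ball is a given function that may be tiny or vanish. None of your remedies works:
\begin{itemize}
\item Shifting to $T'>T$ only multiplies $V(\cdot,t_1)$ by $((T-t_1)/(T'-t_1))^{1/(m-1)}$. Fitting under an arbitrary $u(\cdot,t_1)$ needs $T'-T$ large, and then $V$ stays bounded up to $T$.
\item The limit $T'\downarrow T$ needs $u(\cdot,t_1)\ge(T-t_1)^{-1/(m-1)}F$ in the ball. That is a quantitative interior lower bound which nothing in the hypotheses provides, so the ``uniform in $T'$'' requirement is circular.
\item A small multiple $\varepsilon V$ with $\varepsilon<1$ is a supersolution, not a subsolution, since $\partial_t(\varepsilon V)-\Delta(\varepsilon V)^m=(\varepsilon-\varepsilon^m)(T-t)^{-m/(m-1)}F/(m-1)\ge0$. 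The rescaling that does preserve the porous medium equation just gives another time shift.
\item \cite{ILS24b} shows that the outer edge $R(t)$ of the support goes to infinity, not that $u(\cdot,t_1)>0$ on $\overline{B(0,r_0)}$. Case (b) of the paper deals precisely with supports keeping an inner edge, and positivity alone would not give a bound of order $(T-t_1)^{-1/(m-1)}$ anyway.
\item A subsolution supported in an annulus $\{\rho\le|x|\le r_0\}$ only yields blow-up in that annulus, not near the origin.
\end{itemize}

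You also misplace the role of the largeness of $C_0$. The claim $F(r_0;a)\to0$ as $a\to0$ is false. The function $c_*r^{2/(m-1)}$, with $c_*^{m-1}=(m-1)/(2m(mN-N+2))$, is itself a solution of $\Delta F^m=F/(m-1)$; it is the constant appearing right after the proof in the paper. Uniqueness for this Dirichlet problem on smaller balls forbids crossings, so $F(r_0;a)>c_*r_0^{2/(m-1)}$ for every $a>0$, with equality only in the limit $a\to0$. For boundary values $\le c_*r_0^{2/(m-1)}$ the stationary profile vanishes at or near the origin, so the separable solution does not blow up there. Hence ``$C_0$ large'' is what makes the stationary profile positive in the whole ball; it has nothing to do with the initial ordering. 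Under the hypothesis, the intermediate value theorem still gives your positive $F$, so this slip is not fatal by itself.

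What is missing is a subsolution that starts below $u(\cdot,t_1)$, for instance from zero, yet blows up at time $T$ at every interior point. One way to get it:
\begin{enumerate}
\item Set $\Phi(x,\tau)=(T-t)^{1/(m-1)}u(x,t)$ with $\tau=-\log(T-t)$. Then $\Phi_\tau\ge\Delta\Phi^m-\Phi/(m-1)$ in $B(0,r_0)$, and $\Phi\ge C_0/2$ on $\partial B(0,r_0)$ for $\tau\ge\tau_1$.
\item Compare with the solution $\Psi$ of $\Psi_\tau=\Delta\Psi^m-\Psi/(m-1)$ with $\Psi=C_0/2$ on $\partial B(0,r_0)$ and $\Psi(\cdot,\tau_1)=0$.
\item Since $0$ is a stationary subsolution, $\Psi$ is nondecreasing in $\tau$. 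It is bounded by the stationary solution $F$ with $F=C_0/2$ on the boundary, and by uniqueness it converges to $F$.
\item When $C_0/2>c_*r_0^{2/(m-1)}$ we have $F>0$ in the ball. Hence $\liminf_{t\to T}(T-t)^{1/(m-1)}u(x,t)\ge F(|x|)>0$ for every $|x|<r_0$.
\end{enumerate}
This argument, or the result the paper quotes from \cite{V09}, is the ingredient your proof needs.
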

\begin{proof}
Let $r_0>0$ be as in the statement. Then, taking into account the radial symmetry of $u$, there is $t(C_0)\in(0,T(u_0))$ such that
$$
u(x,t)>\frac{C_0}{2}(T-t)^{-1/(m-1)}, \quad x\in\real^N, \quad |x|=r_0, \quad t\in(t(C_0),T(u_0)).
$$
Since $u$ is a solution to \eqref{eq1}, it is thus a supersolution to the standard porous medium equation $u_t=\Delta u^m$ and we derive from \cite[Theorem 1.2]{V09} and the comparison principle that, if $C_0$ is sufficiently large, $B(0,r_0)\subset B(u)$. Indeed, an argument completely analogous to the proof of Proposition \ref{prop.bounded} based on comparison with stationary supersolutions shows that $u$ cannot blow up at a prior time $T<T(u_0)$ while the boundary condition on $\partial B(0,r_0)$ is still finite, thus ensuring (together with \cite[Theorem 1.2]{V09}) that blow up takes place at any point of $B(0,r_0)$ at the same time $T(u_0)$. Assume then for contradiction that $B(u)\neq\real^N$, thus, there is $r_1>r_0$ such that $u(x,t)$ is uniformly bounded for any $x\in\real^N$ such that $|x|=r_1$ and for $t\in(0,T(u_0))$. But then Proposition \ref{prop.bounded} implies that $u(x,t)$ is uniformly bounded for $(x,t)\in B(0,r_1)\times(0,T(u_0))$, contradicting the fact that $B(0,r_0)\subset B(u)$. Thus, $B(u)=\real^N$, as claimed.
\end{proof}

The previous results shows that, under the restriction \eqref{interm19}, the only possible blow-up sets of a solution $u$ to Eq. \eqref{eq1} are either $B(u)=\real^N$ or $B(u)=\{\infty\}$. We strongly believe that this dichotomy is true for any solution $u$ to Eq. \eqref{eq1}, and the condition \eqref{interm19} is only technical, but new ideas are to be found in order to remove it. We also expect that the outcome of Proposition \ref{prop.unbounded} might be useful in further developments related to the open problems discussed at the end of the Introduction.

However, let us stress here that the \emph{limiting behavior} given in \eqref{interm19} is actually \emph{attained on self-similar solutions} in the form \eqref{SSS} to Eq. \eqref{eq1} presenting either compact support or a tail as $|x|\to\infty$ and a profile with local behavior near the origin given by
\begin{equation*}
\lim\limits_{\xi\to0}\xi^{-2/(m-1)}f(\xi)=\left[\frac{m-1}{2m(mN-N+2)}\right]^{1/(m-1)},
\end{equation*}
which exist (at least for some critical value of $\sigma$) according to \cite[Theorem 1.3]{ILS24a} (dimension $N\geq2$) and \cite[Theorem 1.4]{IS21} (dimension $N=1$). It is shown in the quoted references that the corresponding self-similar solutions blow up at time $T$ with the same rate of the $L^{\infty}$ norm as in Theorem \ref{th.1}, with blow-up set $B(u)=\real^N$, but with a pointwise blow-up rate at a fixed $x\in\real^N$ achieving equality in \eqref{interm19}.

\medskip

\noindent \textbf{Discussion about blow-up sets.} Let us first recall that, as proved in \cite{IS21, ILS24a}, there exist $\sigma_0$, $\sigma_1$ such that $0<\sigma_0<\sigma_1<\infty$ and:

$\bullet$ for any $\sigma\in(0,\sigma_0)$, there exists $A(\sigma)>0$ such that the self-similar profiles $f(\cdot;A)$ solutions to \eqref{SSODE} and subject to the initial conditions
\begin{equation}\label{initcond.prof}
f(0;A)=A\in(0,A(\sigma)), \quad f'(0;A)=0,
\end{equation}
are strictly positive for any $\xi>0$. In dimension $N=1$, the existence of such a value of $\sigma_0$ is ensured by an inspection of the proof of \cite[Theorem 1.5]{IS21} together with a standard argument based on the behavior near the saddle point denoted by $P_2$ in the dynamical system in the above mentioned proof, and in fact, it follows by this argument and the statement of \cite[Theorem 1.5]{IS21} that profiles satisfying \eqref{initcond.prof} have a tail behavior given by
\begin{equation}\label{tail}
\lim\limits_{\xi\to\infty}\xi^{\sigma/(p-1)}f(\xi;A)=\left(\frac{1}{p-1}\right)^{1/(p-1)}.
\end{equation}
In dimensions $N\geq2$, the existence of $\sigma_0$ for which profiles satisfying \eqref{initcond.prof} are strictly positive is granted by the results of \cite[Section 5]{ILS24a} together with an argument of continuity with respect to $\sigma$ in a neighborhood of $\sigma=0$, although we did not prove the expected tail behavior \eqref{tail} in this case.

$\bullet$ for any $\sigma\in(\sigma_1,\infty)$, there exists $K\in(0,\infty)$ (depending on $\sigma$) and compactly supported self-similar profiles $f_{K}$ such that
\begin{equation}\label{interm10}
\lim\limits_{\xi\to0}\frac{f_{K}(\xi)}{\xi^{(\sigma+2)/(m-p)}}=K.
\end{equation}
This existence result stems from \cite[Theorem 1.4]{IS21} in dimension $N=1$ and \cite[Theorem 1.3]{ILS24a} in dimension $N\geq2$.

Let $m$, $p$ as in \eqref{range.exp} and let $\sigma_0$ and $\sigma_1$ as above. Let $u_0$ be an initial condition as in \eqref{icond} and $T(u_0)$ be the blow-up time of the solution $u$ to Eq. \eqref{eq1} with initial condition $u_0$. We next discuss the following cases:

\medskip

\textbf{(a)} For $\sigma\in(0,\sigma_0)$, assume that $u_0(0)>0$ and there is $A<\min\{u_0(0),A(\sigma)\}$ such that $T(u_0)^{\alpha}u_0(T(u_0)^{-\beta}\xi)$ and $f(\xi;A)$ have a single point of intersection. Define the self-similar solution
\begin{equation}\label{SSSA}
U(x,t;A)=(T(u_0)-t)^{-\alpha}f(|x|(T(u_0)-t)^{\beta};A)
\end{equation}
The latter property gives that the radially symmetric profiles of $u_0$ and $U(x,0;A)$ have a single intersection point. The fact that $u$ and $U(\cdot,\cdot;A)$ have the same blow-up time and an intersection comparison argument (see \cite[Propositions 1 and 3, pp. 240-243]{S4} in dimension $N=1$ and \cite[Section 5]{GV94} or \cite[Section 6.3]{V22} for a generalization of the theory of intersection comparison in radially symmetric variables in dimension $N\geq2$) imply that the profiles in radially symmetric variables of $u(t)$ and $U(\cdot,t;A)$ have a single point of intersection for any $t\in(0,T(u_0))$. Moreover, since $u_0$ is compactly supported, it follows by the finite speed of propagation \cite[Theorem 1.5]{ILS24b} that $u(t)$ is compactly supported for any $t\in(0,T(u_0))$. On the contrary, the self-similar solution \eqref{SSSA} is positive for any $t\in(0,T(u_0))$. This ordering and the uniqueness of the intersection point readily give that
$$
u(0,t)\geq U(0,t;A), \quad t\in(0,T(u_0)).
$$
Since the solution given in \eqref{SSSA} blows up at $x=0$, it follows that the origin is a blow-up point for $u$. Proposition \ref{prop.bounded} ensures then that $B(u)=\real^N$.

\medskip

\textbf{(b)} For $\sigma>\sigma_1$, assume that the initial condition $u_0$ satisfies $u_0\equiv0$ on $B(0,\delta)$ for some $\delta>0$ and that there is $K\in(0,\infty)$ and a profile $f_K$ as in \eqref{interm10} such that $T(u_0)^{\alpha}u_0(T(u_0)^{-\beta}\xi)$ and $f_K$ have a single point of intersection. Introduce the self-similar solution
\begin{equation*}
U_K(x,t)=(T(u_0)-t)^{-\alpha}f_K(|x|(T(u_0)-t)^{\beta}).
\end{equation*}
Thus, $u_0$ and $U_K(x,0)$ have a single point of intersection, and a similar argument as in part (a) entails that $u(t)$ and $U_K(t)$ have a single point of intersection (in radial variables), for any $t\in(0,T(u_0))$. This means that the initial ordering of the two functions remains unchanged for any $t\in(0,T(u_0))$; that is, for any $t\in(0,T)$, the left edge of the support of $u(t)$ lies in the positivity set of $U_K(t)$, while the right edge of the support of $u(t)$ lies in the zero set of $U_K(t)$. Since $U_K$ blows up only at infinity, it is expected that the same property will happen for $u$, but a proof of this fact requires the removal of the condition \eqref{interm19} from Proposition \ref{prop.unbounded}.

\bigskip

\noindent \textbf{Acknowledgements} R. G. I. and A. S. are partially supported by the Spanish project PID2024-160967NB-I00 funded by Agencia Estatal de Investigaci\'on (AEI) of Spain.

\bigskip

\noindent \textbf{Data availability} Our manuscript has no associated data.

\bigskip

\noindent \textbf{Conflict of interest} The authors declare that there is no conflict of interest.

\bibliographystyle{plain}

\end{document}